\newtheorem{theorem}{Theorem}[section]
\newtheorem{lemma}[theorem]{Lemma}
\newtheorem{corollary}[theorem]{Corollary}
\theoremstyle{definition}
\theoremstyle{remark}
\newtheorem{remark}[theorem]{Remark}
\numberwithin{equation}{section}
\begin{document}
\setcounter{page}{1}

\title[Class field towers and minimal models]{Class field towers and minimal models}

\author[Nikolaev]{Igor V. Nikolaev$^1$}

\address{$^{1}$ Department of Mathematics and Computer Science, St.~John's University, 8000 Utopia Parkway,  
New York,  NY 11439, United States.} \email{\textcolor[rgb]{0.00,0.00,0.84}{igor.v.nikolaev@gmail.com}}


\subjclass[2010]{Primary 11R04, 57M25; Secondary 46L85.}

\keywords{class field towers, minimal models,  $C^*$-algebras.}


\begin{abstract}
We use the notion of an  Etesi $C^*$-algebra to prove that  the real class field towers are always finite.
\end{abstract}

\maketitle

\section{Introduction}
Let $k$ be a number field and let $\mathscr{H}(k)$ be the Hilbert class field of $k$, i.e. the maximal abelian unramified 
extension of $k$.  The class field tower is a sequence of the field extensions:
\begin{equation}\label{eq1.1}
k\subseteq \mathscr{H}(k)\subseteq \mathscr{H}^2(k)\subseteq \mathscr{H}^3(k)\subseteq\dots,
\end{equation}
where $\mathscr{H}^2(k)=\mathscr{H}(\mathscr{H}(k)), ~\mathscr{H}^3(k)=\mathscr{H}(\mathscr{H}^2(k))$, etc. Whether there exists an integer $m\ge 1$
such that $\mathscr{H}^m(k)\cong \mathscr{H}^{m+1}(k)\cong\dots$ is known as the  class field tower problem [Furtw\"angler 1916] \cite{Fur1}.
They say that the class field tower  is finite if $m<\infty$  and infinite otherwise.  
It is easy to see, that the tower is finite if and only if the ring of integers of the field $\mathscr{H}^m(k)$ is the principal
ideal domain, i.e.  has the class number $1$.

The Golod-Shafarevich Theorem  says that  (\ref{eq1.1}) can be infinite for some fields $k$
[Golod \& Shafarevich 1964] \cite{GolSha1}. This result solves in negative
 the class field tower problem. 
  On the other hand,  many  fields $k$  have  finite class field towers.  
     The sorting of $k$  by  the finite and infinite towers
(\ref{eq1.1})  is a difficult open problem.

In this note we study the real class field towers,
i.e. when all fields $\mathscr{H}^i(k)$ in (\ref{eq1.1})  are real. 
It is shown that  such towers are always finite, see corollary \ref{cor1.2}. 
To outline the idea,  we prove that the blow-up map  of  an algebraic surface induces 
 a Hilbert class field extension of a field coming from the $K_0$-group of the corresponding Etesi $C^*$-algebra \cite[Section 7.5]{Nik1},
 see theorem \ref{thm1.1}.  The Castelnuovo Theorem says that it takes a finite number of the blow-ups 
 of an algebraic surface to  get the minimal model, hence  (\ref{eq1.1}) is finite.
 To formalize our results, let us recall some definitions.   

An algebraic surface $S$ is a variety of complex dimension $2$.  The rational map $\phi: S\dashrightarrow S'$ is called birational, 
if its inverse $\phi^{-1}$ is a rational map. 
A birational map  $\phi: S\dashrightarrow S'$ is  a blow-up,
if it is defined everywhere except for a point $p\in S$ and a rational curve $C\subset S'$,
such  that  $\phi^{-1}(C)=p$.   Each  birational map  is composition of a finite 
number of the blow-ups.
The surface $S$ is called a minimal model, if any birational map $S\dashrightarrow S'$
is an isomorphism. 
The Castelnuovo Theorem says that $S$ is a minimal 
model if and only if $S$ does not contain rational curves $C$ with the 
self-intersection index $-1$.  In particular, the minimal model is
obtained from $S$ by a finite number of the blow-ups along $C$.

Recall that $S$ can be identified with  a smooth 4-dimensional manifold. We 
 denote by $\mathit{Diff}(S)$ 
a group of the orientation-preserving diffeomorphisms of $S$ 
and by  $\mathit{Diff}_0(S)$ the connected component of $\mathit{Diff}(S)$ 
containing the identity.  The Etesi $C^*$-algebra  $\mathbb{E}_{S}$
is  a group $C^*$-algebra   of the locally compact group 
$G:=\mathit{Diff}(S)/\mathit{Diff}_0(S)$ \cite[Definition 1.1]{Nik1}. 
Since $G$ is a countable, discrete, amenable group acting on $S$ and 
the action  admits a faithful $G$-invariant Borel probability measure (e.g. by taking
the Lebesgue measure of the orbit space $S/G$), we conclude that 
 $\mathbb{E}_{S}\cong C_0(S)\rtimes G$ embeds into a simple unital AF-algebra [Schafhauser 2020] \cite[Theorem C]{Sch1}. 
   Moreover, such an AF-algebra is stationary  \cite[Lemma  7.5.3]{Nik1} depending 
on a  constant  integer matrix 
$A\in GL(n, \mathbf{Z})$ [Blackadar 1986] \cite[Section 7.2]{B}.  
Let $\lambda_A>1$ be the Perron-Frobenius eigenvalue of $A$
and let $k=\mathbf{Q}(\lambda_A)$ be a real number 
field generated by the algebraic number $\lambda_A$. 
Our main result can be formulated  as follows.  
\begin{theorem}\label{thm1.1}
The birational map $S\dashrightarrow S'$ is a blow-up
if and only if $k'\cong \mathscr{H}(k)$. 
\end{theorem}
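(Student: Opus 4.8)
The plan is to carry the birational geometry of $S$ through the functor $S\mapsto\mathbb{E}_S$ of \cite{Nik1} and to match the resulting $C^*$-algebraic data against class field theory. By the Castelnuovo criterion recalled above, $\phi\colon S\dashrightarrow S'$ is a blow-up exactly when, up to diffeomorphism, $S'$ is the connected sum of $S$ with one copy of $\overline{\mathbf{CP}^2}$; equivalently $H_2(S';\mathbf{Z})\cong H_2(S;\mathbf{Z})\oplus\mathbf{Z}$ with the new generator the class of a rational curve $C$ with $[C]^2=-1$, so that $\mathit{Diff}(S')/\mathit{Diff}_0(S')$ is the corresponding enlargement of $\mathit{Diff}(S)/\mathit{Diff}_0(S)$. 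On the arithmetic side, $k'\cong\mathscr{H}(k)$ says exactly that $k'/k$ is finite, abelian, unramified at every place, and maximal with these properties, so that the Artin map identifies $\mathrm{Gal}(k'/k)$ with the class group $\mathrm{Cl}(\mathcal{O}_k)$. The goal is to prove that the assignment $S\mapsto(\mathbb{E}_S,A,\lambda_A,k)$ sends the first description to the second.

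First I would make the functor explicit on invariants. By \cite[Theorem 1.2]{Nik1} the algebra $\mathbb{E}_S$ is the stationary AF-algebra of a matrix $A\in GL(n,\mathbf{Z})$, so by the Elliott classification the pair $(\mathbb{E}_S,[1])$ is recorded by the dimension group $\varinjlim(\mathbf{Z}^n,A)$ with its order and order unit \cite[Section 7.2]{B}. Viewing $\mathbf{Z}^n$ as a module over $\mathbf{Z}[A]$ and invoking the Latimer--MacDuffee correspondence then attaches to $A$ an ideal class $\mathfrak{c}_S$ in an order $\mathcal{O}_A\subseteq k=\mathbf{Q}(\lambda_A)$; thus $S$ determines not merely the field $k$ but a canonical class $\mathfrak{c}_S$, which is precisely the extra information carried by the order structure of the dimension group.

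Next I would compute one blow-up. Enlarging $\mathit{Diff}/\mathit{Diff}_0$ by the reflection attached to a new $(-1)$-class replaces $A$ by a matrix $A'$ whose dimension group is that of $A$ with $\mathfrak{c}_S$ made principal; concretely the characteristic polynomial of $A'$ acquires an irreducible factor of degree $[k:\mathbf{Q}]\cdot h(\mathcal{O}_k)$ having $\lambda_{A'}$ as a root. Since the smallest extension of $k$ that is abelian and unramified over $k$ and in which every ideal of $\mathcal{O}_k$ becomes principal is exactly the Hilbert class field $\mathscr{H}(k)$ --- by the principal ideal theorem together with Artin reciprocity --- one obtains $k'=\mathbf{Q}(\lambda_{A'})\cong\mathscr{H}(k)$, which is the forward implication. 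For the converse, a birational map of surfaces is a composition of blow-ups (and their inverses) as recalled above, and by the forward implication each blow-up passes from the current field to its Hilbert class field; hence $k'\cong\mathscr{H}(k)$ forces the composition to collapse to a single blow-up, the delicate point being the degenerate case $h(\mathscr{H}(k))=1$, in which $\mathscr{H}^2(k)=\mathscr{H}(k)$ and one must supplement $k'\cong\mathscr{H}(k)$ by the rank of the dimension group (equivalently $b_2$) to pin down $S'$.

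The main obstacle is the computation in the previous paragraph: showing that the group-theoretic enlargement induced by a blow-up acts on the dimension group of $\mathbb{E}_S$ \emph{exactly} as the passage to the maximal abelian unramified extension that principalizes $\mathfrak{c}_S$, and not as the passage to some intermediate, ramified, or ring-class extension. This requires checking three things at once --- that $k'/k$ is abelian, that it is unramified at every place including the archimedean ones (here the hypothesis that $\lambda_A>1$ is real, so that $k$ and $k'$ remain real, is what forbids ramification at infinity), and that it is maximal with these properties --- by tracking the $-1$ self-intersection and the orientation data through the construction of \cite{Nik1} and using functoriality of the Artin map along $\mathcal{O}_k\hookrightarrow\mathcal{O}_{\mathscr{H}(k)}$. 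One should also verify that the order $\mathcal{O}_A$ produced by the functor is the maximal order $\mathcal{O}_k$, so that it is $\mathscr{H}(k)$ and not a ring class field that appears; the remaining steps are routine bookkeeping with the classification of stationary AF-algebras.
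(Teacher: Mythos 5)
Your route is genuinely different from the paper's. You try to compute the effect of a single blow-up on the $C^*$-algebraic invariant directly: blow-up $=$ connected sum with $\overline{\mathbf{CP}^2}$, hence an explicit enlargement of $\mathit{Diff}(S)/\mathit{Diff}_0(S)$, hence a new matrix $A'$, with the Latimer--MacDuffee ideal class $\mathfrak{c}_S$ becoming principal and the principal ideal theorem identifying $\mathbf{Q}(\lambda_{A'})$ with $\mathscr{H}(k)$. The paper instead argues via Zariski density: a rational map is regular on a dense open $U\subset S$, the compactification $S_0=U~\#_n~S^4$ already determines $S$, so the extension $k_0\subseteq k$ ``depends only on the arithmetic of $k_0$'' and is therefore the Hilbert class field. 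The difficulty is that your central step is precisely the statement you concede you cannot verify: the claim that adjoining the reflection in a new $(-1)$-class replaces $A$ by an $A'$ whose dimension group is that of $A$ ``with $\mathfrak{c}_S$ made principal,'' and that the resulting extension is abelian, everywhere unramified and maximal. That claim \emph{is} the theorem; nothing in \cite{Nik1} or in the classification of stationary AF-algebras supplies it, and your final paragraph explicitly labels it ``the main obstacle.'' As it stands the proposal is a strategy with the decisive computation left blank, not a proof.

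Two further points. First, even granting the principalization description, the Hilbert class field is not the smallest extension of $k$ in which every ideal of $\mathcal{O}_k$ becomes principal: capitulation can and does occur in proper subfields of $\mathscr{H}(k)$, so the principal ideal theorem alone cannot pin down $k'$; you would still need to extract ``abelian, unramified, maximal'' from the topology, which is the unproved part. Second, your observation about the degenerate case $h(\mathscr{H}(k))=1$ is not a removable technicality but a genuine defect in the statement: if $\mathscr{H}^2(k)\cong\mathscr{H}(k)$, then a composition of two blow-ups also yields $k'\cong\mathscr{H}(k)$, so the ``only if'' direction fails as stated. The paper's own converse --- substitute $k'\cong\mathscr{H}(k)$ into $\mathscr{H}(k)\cong\mathscr{H}(k')$, conclude the tower ``cannot be decomposed into sub-towers,'' hence the map is a single blow-up --- founders on exactly this example, so here you have located a problem with the theorem itself rather than only with your argument.
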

As explained above, theorem \ref{thm1.1} can be used to study 
 inclusions (\ref{eq1.1}). Namely, in view of  Castelnuovo's theory of the minimal models,  
 one gets the following application of theorem \ref{thm1.1}. 
\begin{corollary}\label{cor1.2}
The real class field towers are always finite.
 \end{corollary}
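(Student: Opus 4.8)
The plan is to derive the corollary from Theorem \ref{thm1.1} by reading the class field tower (\ref{eq1.1}) backwards through the functor $S \mapsto \mathbb{E}_S$, and then invoking the Castelnuovo theorem. The starting point is the following correspondence: given a real number field $k$ appearing in (\ref{eq1.1}), realize it as $k = \mathbf{Q}(\lambda_A)$ for the Perron--Frobenius eigenvalue $\lambda_A$ of the matrix $A \in GL(n,\mathbf{Z})$ attached to the Etesi $C^*$-algebra $\mathbb{E}_S$ of some algebraic surface $S$; this is what makes the hypothesis "real class field tower" meaningful in the present setting, since the surface $S$ must exist with $k_S \cong k$. I would first make this realization step explicit, noting that the field $\mathscr{H}^{i}(k)$ at each level of the tower is itself of the form $k_{S_i}$ for a surface $S_i$.

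Next I would run the tower through Theorem \ref{thm1.1}. By that theorem, the inclusion $\mathscr{H}^{i}(k) \subseteq \mathscr{H}^{i+1}(k)$, being an instance of $k \subseteq \mathscr{H}(k)$, corresponds precisely to a blow-up $S_{i+1} \dashrightarrow S_i$ of algebraic surfaces (the direction of the arrow is the one producing the larger Hilbert class field). Thus the ascending chain (\ref{eq1.1}) of field extensions lifts to a chain of blow-ups
\begin{equation}\label{eq:blowupchain}
\dots \dashrightarrow S_3 \dashrightarrow S_2 \dashrightarrow S_1 \dashrightarrow S,
\end{equation}
each arrow being a genuine blow-up (not an isomorphism) exactly when the corresponding tower inclusion is strict, i.e. when $\mathscr{H}^{i}(k) \not\cong \mathscr{H}^{i+1}(k)$. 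So the tower is infinite if and only if the chain (\ref{eq:blowupchain}) consists of infinitely many honest blow-ups.

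Finally I would close the argument with Castelnuovo's theory of minimal models, as recalled in the introduction: a birational map is a composition of finitely many blow-ups, and after finitely many blow-ups along $(-1)$-curves one reaches a minimal model $S_{\min}$, which admits no further non-trivial blow-down — equivalently, contains no rational curve of self-intersection $-1$. Hence the chain (\ref{eq:blowupchain}) must stabilize: there is an $m$ with $S_m \cong S_{m+1} \cong \dots \cong S_{\min}$. Transporting this back through Theorem \ref{thm1.1} gives $\mathscr{H}^{m}(k) \cong \mathscr{H}^{m+1}(k) \cong \dots$, which is precisely finiteness of the real class field tower. The step I expect to be the main obstacle is the first one: ensuring that \emph{every} level $\mathscr{H}^{i}(k)$ of a real tower genuinely arises as $k_{S_i}$ for an algebraic surface, and that the surfaces $S_i$ can be chosen coherently so that consecutive pairs $(S_{i+1}, S_i)$ satisfy the hypotheses of Theorem \ref{thm1.1} (in particular that the relevant $K_0$-data and Perron--Frobenius eigenvalues line up); granting the functoriality established in \cite{Nik1} and Theorem \ref{thm1.1}, the remainder is a direct application of Castelnuovo's finiteness.
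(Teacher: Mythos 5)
Your proposal is correct (by the paper's own standards) and follows the paper's proof essentially step for step: realize each level $\mathscr{H}^i(k)$ of the tower as the field attached to an algebraic surface --- the step you rightly flag as the main obstacle, which the paper handles via the inverse functor of Remark \ref{rmk2.2} together with a choice of smoothing and Chow's theorem --- then convert each inclusion of the tower into a blow-up via Theorem \ref{thm1.1}, and finally invoke Castelnuovo's theory to bound the length of the resulting chain of blow-ups by the finitely many steps needed to reach a minimal model. The only discrepancy is cosmetic: your displayed chain of surfaces has the arrows and indices running opposite to the paper's Figure 4, where the blow-ups go from $S$ toward the minimal model $S^{(m)}$ in the same direction as the ascending tower.
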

\begin{remark}
The  real class field towers and the class field towers over the real 
fields  are not the same in general. The latter admit imaginary Hilbert class fields 
and  can be infinite. 
\end{remark}
The article is organized as follows.  The preliminary facts can be found in Section 2. 
Theorem \ref{thm1.1} and corollary \ref{cor1.2}  are proved in Section 3.

\section{Preliminaries}
We briefly review the 4-dimensional topology,  the Etesi $C^*$-algebras and 
the algebraic surfaces.  We refer the reader to  [Beauville 1996] \cite{BE}, [Freedman \& Quinn 1990] \cite{FQ},
and \cite[Section 7.5]{Nik1} for a detailed account.

\subsection{Topology of 4-manifolds}
We denote by $\mathscr{M}$ a compact 4-dimensional manifold. 
 Unlike dimensions 2 and 3, the smooth structures are
detached from the topology of  $\mathscr{M}$. 
Due to the works of Rokhlin, Freedman and Donaldson, it is known that
 $\mathscr{M}$ can be  non-smooth and    if there exists a smooth structure, it need not be unique. 
In what follows, we assume  $\mathscr{M}$ to be a smooth 4-manifold endowed with the standard 
smooth structure, e.g. coming from  a realization of  $\mathscr{M}$ as a complex algebraic surface $S$.

Let  $\mathit{Diff}(\mathscr{M})$ be
a group of the orientation-preserving diffeomorphisms of $\mathscr{M}$
Denote by  $\mathit{Diff}_0(\mathscr{M})$  the connected component of $\mathit{Diff}(\mathscr{M})$
containing the identity.  The group  $\mathit{Diff}(\mathscr{M})/ \mathit{Diff}_0(\mathscr{M})$
is discrete and therefore locally compact.

\subsection{Etesi $C^*$-algebras}
\subsubsection{$C^*$-algebras}
The $C^*$-algebra is an algebra  $\mathscr{A}$ over $\mathbf{C}$ with a norm 
$a\mapsto ||a||$ and an involution $\{a\mapsto a^* ~|~ a\in \mathscr{A}\}$  such that $\mathscr{A}$ is
complete with  respect to the norm, and such that $||ab||\le ||a||~||b||$ and $||a^*a||=||a||^2$ for every  $a,b\in \mathscr{A}$.  
Each commutative $C^*$-algebra is  isomorphic
to the algebra $C_0(X)$ of continuous complex-valued
functions on some locally compact Hausdorff space $X$. 
Any other  algebra $\mathscr{A}$ can be thought of as  a noncommutative  
topological space. 

An  AF-algebra  (Approximately Finite-dimensional $C^*$-algebra) is
 the  norm closure of an ascending sequence of   finite dimensional
$C^*$-algebras $M_n$,  where  $M_n$ is the $C^*$-algebra of the $n\times n$ matrices
with entries in $\mathbf{C}$. Here the index $n=(n_1,\dots,n_k)$ represents
the  semi-simple matrix algebra $M_n=M_{n_1}\oplus\dots\oplus M_{n_k}$.
The ascending sequence mentioned above  can be written as 
$M_1\buildrel\rm\varphi_1\over\longrightarrow M_2
   \buildrel\rm\varphi_2\over\longrightarrow\dots,$
where $M_i$ are the finite dimensional $C^*$-algebras and
$\varphi_i$ the homomorphisms between such algebras.  
If $\varphi_i=Const$, then the AF-algebra $\mathscr{A}$ is called 
stationary.  In particular, such an AF-algebra is given by a constant positive
integer matrix  $A\in GL(n, \mathbf{Z})$.

\subsubsection{K-theory}
By $M_{\infty}(\mathscr{A})$ 
one understands the algebraic direct limit of the $C^*$-algebras 
$M_n(\mathscr{A})$ under the embeddings $a\mapsto ~\mathbf{diag} (a,0)$. 
The direct limit $M_{\infty}(\mathscr{A})$  can be thought of as the $C^*$-algebra 
of infinite-dimensional matrices whose entries are all zero except for a finite number of the
non-zero entries taken from the $C^*$-algebra $\mathscr{A}$.
Two projections $p,q\in M_{\infty}(\mathscr{A})$ are equivalent, if there exists 
an element $v\in M_{\infty}(\mathscr{A})$,  such that $p=v^*v$ and $q=vv^*$. 
The equivalence class of projection $p$ is denoted by $[p]$.   
We write $V(\mathscr{A})$ to denote all equivalence classes of 
projections in the $C^*$-algebra $M_{\infty}(\mathscr{A})$, i.e.
$V(\mathscr{A}):=\{[p] ~:~ p=p^*=p^2\in M_{\infty}(\mathscr{A})\}$. 
The set $V(\mathscr{A})$ has the natural structure of an abelian 
semi-group with the addition operation defined by the formula 
$[p]+[q]:=\mathbf{diag}(p,q)=[p'\oplus q']$, where $p'\sim p, ~q'\sim q$ 
and $p'\perp q'$.  The identity of the semi-group $V(\mathscr{A})$ 
is given by $[0]$, where $0$ is the zero projection. 
By the $K_0$-group $K_0(\mathscr{A})$ of the unital $C^*$-algebra $\mathscr{A}$
one understands the Grothendieck group of the abelian semi-group
$V(\mathscr{A})$, i.e. a completion of $V(\mathscr{A})$ by the formal elements
$[p]-[q]$.

The canonical trace $\tau$ on the AF-algebra  $\mathbb{A}$. 
 induces a homomorphism $\tau_*: K_0(\mathbb{A})\to\mathbf{R}$. 
We let $\mathfrak{m}:=\tau_*( K_0(\mathbb{A}))\subset\mathbf{R}$.  
If    $\mathbb{A}$   is the stationary AF-algebra given by a matrix $A\in GL(n, \mathbf{Z})$, 
then $\mathfrak{m}$ is a $\mathbf{Z}$-module 
in the number field $k=\mathbf{Q}(\lambda_A)$, where  $\lambda_A>1$ is  the Perron-Frobenius 
eigenvalue of  the matrix $A$.    The endomorphism ring of $\mathfrak{m}$
is denoted by $\Lambda$ and the ideal class of $\mathfrak{m}$ is denoted by $[\mathfrak{m}]$. 
The $(\Lambda, [\mathfrak{m}], k)$ is called a Handelman triple of the stationary AF-algebra 
 $\mathbb{A}$.

\subsubsection{Etesi $C^*$-algebra}
Let $\mathscr{M}$ be a smooth 4-dimensional manifold. 
The  group $C^*$-algebra  $\mathbb{E}_{\mathscr{M}}$ of the locally compact group 
$\mathit{Diff}(\mathscr{M})/ \mathit{Diff}_0(\mathscr{M})$
is called the Etesi $C^*$-algebra of $\mathscr{M}$. 
Some properties of  the $\mathbb{E}_{\mathscr{M}}$ are described in below. 
\begin{theorem}\label{cor1.4}
{\bf (\cite[Section 7.5]{Nik1})}
The following is true:

\medskip
(i)  the $\mathbb{E}_{\mathscr{M}}$ is a stationary AF-algebra;

\smallskip
(ii)  the Handelman triple $(\Lambda, [\mathfrak{m}], k)$ is a topological
invariant  of $\mathscr{M}$. 
\end{theorem}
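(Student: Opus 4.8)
The plan is to prove (i) and (ii) separately, with (i) supplying the stationary AF structure that makes the invariant in (ii) well posed. For (i) I would first analyze the discrete group $G:=\mathit{Diff}(\mathscr{M})/\mathit{Diff}_0(\mathscr{M})$ through its action on the second homology. Every orientation-preserving diffeomorphism preserves the intersection form $q_{\mathscr{M}}$ on $H_2(\mathscr{M};\mathbf{Z})\cong\mathbf{Z}^n$ and descends to the mapping class group, so there is a homomorphism $\rho:G\to\mathrm{Aut}(H_2(\mathscr{M};\mathbf{Z}),q_{\mathscr{M}})$. The goal is to exhibit $G$ (or the image $\rho(G)$, with the kernel controlled by Freedman--Quinn \cite{FQ}) as an inductive limit $G=\varinjlim_i G_i$ of an increasing system of subgroups whose group $C^*$-algebras $C^*(G_i)$ are finite dimensional. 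Since passage to group $C^*$-algebras is functorial and preserves inductive limits, $\mathbb{E}_{\mathscr{M}}=C^*(G)=\varinjlim_i C^*(G_i)$ is a norm limit of finite-dimensional $C^*$-algebras, i.e. an AF-algebra in the sense of the Preliminaries.

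To upgrade ``AF'' to ``stationary AF'' I would locate a self-similarity of the system making all connecting maps $\varphi_i:C^*(G_i)\to C^*(G_{i+1})$ coincide with a single map. Concretely, I expect the inclusions $G_i\hookrightarrow G_{i+1}$ to be modeled on one fixed endomorphism coming from the repetitive handle/cell structure of $\mathscr{M}$, equivalently from a canonical self-map of the intersection lattice. On $K_0$ this records a single non-negative matrix $A\in GL(n,\mathbf{Z})$ whose Bratteli diagram repeats at every level; assuming $A$ primitive, it has a Perron--Frobenius eigenvalue $\lambda_A>1$, and $K_0(\mathbb{E}_{\mathscr{M}})=\varinjlim(\mathbf{Z}^n\xrightarrow{A}\mathbf{Z}^n\to\cdots)$ together with its positive cone is the dimension group of a stationary AF-algebra by the Effros--Handelman--Shen classification. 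This $A$ is exactly the datum announced in the statement.

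For (ii) I would argue that $(\Lambda,[\mathfrak{m}],k)$ is a complete conjugacy invariant of $A$ up to the relevant equivalence (Handelman), hence an isomorphism invariant of $\mathbb{E}_{\mathscr{M}}$: the canonical trace $\tau$ yields $\mathfrak{m}=\tau_*(K_0)\subset k=\mathbf{Q}(\lambda_A)$ as a $\mathbf{Z}$-module, $\Lambda=\mathrm{End}(\mathfrak{m})$ is its endomorphism ring, and $[\mathfrak{m}]$ its ideal class, all intrinsic to the abstract algebra. Because $\mathbb{E}_{\mathscr{M}}$ is built only from the diffeomorphism group, any diffeomorphism $\mathscr{M}\to\mathscr{M}'$ induces $\mathbb{E}_{\mathscr{M}}\cong\mathbb{E}_{\mathscr{M}'}$ and therefore equality of Handelman triples, giving invariance under oriented diffeomorphism. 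To reach genuine topological invariance one invokes Freedman--Quinn: for the 4-manifolds in play the triple is controlled by $\rho(G)$, hence by the oriented isometry class of $(H_2(\mathscr{M};\mathbf{Z}),q_{\mathscr{M}})$, which is a homeomorphism invariant, so the triple does not detect the exotic smooth structure.

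The main obstacle is step (i): group $C^*$-algebras of infinite discrete groups are AF only under stringent hypotheses, so the crux is to establish both that $C^*(G)$ is AF and that its Bratteli diagram is genuinely \emph{stationary} --- producing a single matrix $A$ rather than a sequence of distinct connecting maps. I expect the identification of this self-similar structure of the mapping class group, and the check that it is compatible with the trace so that the Perron--Frobenius data is well posed, to absorb most of the work; the invariance claim (ii) is then comparatively formal once (i) and Handelman's theorem are in hand.
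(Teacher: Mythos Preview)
The paper does not prove this statement at all: Theorem~\ref{cor1.4} is quoted verbatim from the reference \cite{Nik1} and is used here as a black box. There is therefore no ``paper's own proof'' against which to compare your proposal; any argument would have to be measured against the cited preprint, not the present article.

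That said, your sketch has a genuine structural gap you should be aware of. Writing $G=\varinjlim G_i$ with $C^*(G_i)$ finite dimensional forces each $G_i$ to be a \emph{finite} group, so you are implicitly claiming that the smooth mapping class group of a 4-manifold is locally finite. There is no reason to expect this in general, and your appeal to the action on $(H_2,q_{\mathscr{M}})$ does not help: the kernel of $\rho$ (the Torelli part) need not be locally finite, and ``controlled by Freedman--Quinn'' does not pin it down. Even granting an AF structure, the stationarity step is purely heuristic --- a ``repetitive handle/cell structure'' does not by itself produce a single connecting matrix $A$; one would need an explicit shift endomorphism of the dimension group, and you have not exhibited one. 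Your part~(ii), by contrast, is essentially correct once (i) is in hand: the Handelman triple is a known isomorphism invariant of stationary AF-algebras, and $\mathbb{E}_{\mathscr{M}}$ manifestly depends only on the diffeomorphism type (the upgrade to a homeomorphism invariant via the intersection form is more delicate than you indicate, but is a separate issue). In short, the real content lies entirely in (i), and your plan for (i) does not yet contain a mechanism that would make it go through.
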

\begin{remark}\label{rmk2.2}
The map $F$ acting by the formula:
\begin{equation}\label{eq2.1}
\mathscr{M}\mapsto (\Lambda, [\mathfrak{m}], k)
\end{equation}
is a covariant functor on the category of all 4-dimensional manifolds
with values in a category of the Handelman triples.
In particular, the number field $k$ is a topological invariant of $\mathscr{M}$. 
It is not hard to see, that (\ref{eq2.1}) has an inverse $(\Lambda, [\mathfrak{m}], k)
\mapsto \mathscr{M}$.  Indeed, the group algebra  $\mathbb{E}_{\mathscr{M}}$
can be recovered from the triple $(\Lambda, [\mathfrak{m}], k)$,  while 
the manifold $\mathscr{M}$ is defined by  the group 
$\mathit{Diff}(\mathscr{M})/ \mathit{Diff}_0(\mathscr{M})$. 
\end{remark}

\subsection{Algebraic surfaces}
An algebraic surface is a variety $S$ of the complex dimension $2$. 
One can identify the non-singular variety $S$ with a  complex surface and therefore
with a smooth 4-dimensional manifold $\mathscr{M}$.  
In what follows, we denote by $\mathbb{E}_S$ the Etesi 
$C^*$-algebra of $\mathscr{M}$ corresponding 
to the surface $S$.

The map $\phi: S\dashrightarrow S'$ is called rational,  if it is given  
by a rational function. 
The rational maps cannot be composed unless they are dominant, i.e. the image 
of $\phi$ is Zariski dense in $S'$. 
The map $\phi$ is birational,  if the inverse $\phi^{-1}$ is a rational map. 
A birational map  $\epsilon: S\dashrightarrow S'$ is called a blow-up,
if it is defined everywhere except for a point $p\in S$ and a rational curve $C\subset S'$,
such  that  $\epsilon^{-1}(C)=p$.   
Every birational map $\phi: S\dashrightarrow S'$ is composition of a finite 
number of the blow-ups, i.e. $\phi=\epsilon_1\circ\dots\circ\epsilon_k$.

The surface $S$ is called a minimal model, if any birational map $S\dashrightarrow S'$
is an isomorphism. The minimal models exist and are unique unless $S$ is 
a ruled surface. By the Castelnuovo Theorem, the smooth projective surface $S$ is a minimal 
model if and only if $S$ does not contain a rational curves $C$ with the 
self-intersection index $-1$.

\section{Proofs}
\subsection{Proof of theorem \ref{thm1.1}}
For the sake of clarity, let us outline the main ideas.
Let $S\to S'$ be a regular (polynomial) map between the surfaces
$S$ and $S'$. It is known that such a map induces a homomorphism  $\mathbb{E}_S\to \mathbb{E}_{S'}$
of the  Etesi  $C^*$-algebras and an extension of the fields $k\subseteq k'$
in the corresponding Handelman triples  $(\Lambda, [\mathfrak{m}], k)\subseteq
 (\Lambda', [\mathfrak{m}'], k')$, see Section 2.2.3. 
Recall that the rational map  $S\dashrightarrow S'$ is regular only on an open subset $U\subset S$,
such that the Zariski closure of $U$ coincides with $S$. 
Roughly speaking, we prove that an operation corresponding to the Zariski closure of $U$ 
consists in passing from the field $k$ to its Hilbert class field 
$\mathscr{H}(k)$  (lemma \ref{lm3.1}).  
The rest of the proof follows from the 
inclusion of fields  $\mathscr{H}(k)\subseteq \mathscr{H}(k')$ induced by the rational 
map  $S\dashrightarrow S'$. 
 
\begin{figure}
\begin{picture}(300,110)(-70,0)
\put(20,70){\vector(0,-1){35}}
\put(122,70){\vector(0,-1){35}}
\put(45,23){\vector(1,0){60}}
\put(45,83){\vector(1,0){60}}
\put(15,20){$k_0$}
\put(118,20){$k$}
\put(17,80){$S_0$}
\put(115,80){ $S$}
\put(55,30){\sf inclusion}
\put(50,90){\sf  regular map}
\end{picture}
\caption{}
\end{figure}

\bigskip
We shall split the proof in a series of lemmas. 
 \begin{lemma}\label{lm3.1}
 If $S\dashrightarrow S'$ is a rational map, then $\mathscr{H}(k)\subseteq \mathscr{H}(k')$.
\end{lemma}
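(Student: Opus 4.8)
The plan is to reduce the statement to the behavior of a regular map and then analyze the failure of regularity on a divisor. First I would use the fact, recorded in Section 2.2.3 and in the outline above, that a \emph{regular} (everywhere-defined polynomial) map $S \to S'$ induces a homomorphism $\mathbb{E}_S \to \mathbb{E}_{S'}$ of Etesi $C^*$-algebras, hence by functoriality of $F$ (Remark \ref{rmk2.2}) an inclusion of Handelman triples $(\Lambda,[\mathfrak{m}],k) \subseteq (\Lambda',[\mathfrak{m}'],k')$; in particular an inclusion of number fields $k \subseteq k'$. So if $\phi \colon S \dashrightarrow S'$ happened to be regular, we would get $k \subseteq k'$ and therefore $\mathscr{H}(k) \subseteq \mathscr{H}(k')$ for free, since the Hilbert class field construction is functorial with respect to field inclusions (an unramified abelian extension of $k$ lifts to one of $k'$ after composing). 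The content of the lemma is thus entirely in what happens when $\phi$ is merely rational, i.e. regular only on a Zariski-open $U \subsetneq S$ whose closure is $S$.

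Next I would set up the blow-up normal form. By Castelnuovo/Beauville (Section 2.3), every birational $\phi \colon S \dashrightarrow S'$ factors as a finite composition of blow-ups and blow-downs, so by induction it suffices to treat a single blow-up $\epsilon \colon S \dashrightarrow S'$, contracting a $(-1)$-curve, or dually the inverse blow-up that introduces an exceptional rational curve $C$. The blow-down $\tilde{S} \to S'$ and the blow-down $\tilde{S} \to S$ (where $\tilde S$ is the common resolution realizing $\phi$) are honest \emph{regular} maps, so the previous paragraph gives field inclusions $k \subseteq \tilde k$ and $k' \subseteq \tilde k$. The key claim to isolate and prove — and this is where I expect the real work to lie — is that passing across the exceptional locus, i.e. comparing $k$ with $\tilde k$ when $\tilde S \to S$ is the blow-up of a point, realizes exactly the step $k \subseteq \mathscr{H}(k)$ at the level of the $K_0$-invariants. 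Concretely: the blow-up changes $H^2$ of the 4-manifold by a single $\langle -1 \rangle$ summand, this alters the diffeomorphism group quotient $\mathit{Diff}/\mathit{Diff}_0$ and hence the defining matrix $A \in GL(n,\mathbf Z)$ of the stationary AF-algebra in a controlled way, and one must show the new Perron–Frobenius field $\tilde k$ is the Hilbert class field of the old one. I would try to extract this from the Handelman-triple data: the module $\mathfrak m$ and its endomorphism ring $\Lambda$ govern the ideal class group, and the natural guess is that the blow-up trivializes the class group of $\Lambda$, i.e. $\tilde k$ is the class field that principalizes $[\mathfrak m]$ — this is precisely $\mathscr{H}(k)$.

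Granting that claim, the lemma for a rational (not necessarily birational) $\phi \colon S \dashrightarrow S'$ follows by combining the pieces: resolve $\phi$ through $\tilde S$, so that $\tilde S \to S$ is a composition of blow-ups (giving $\mathscr{H}(k) \subseteq \tilde k \subseteq \mathscr{H}(\tilde k)$ and hence $\mathscr{H}(k) \subseteq \mathscr{H}(\tilde k)$ since $\mathscr{H}$ is monotone) and $\tilde S \to S'$ is regular on the relevant open set (giving $\tilde k \subseteq k'$ after the same Zariski-closure analysis, hence $\mathscr{H}(\tilde k) \subseteq \mathscr{H}(k')$). Chaining these, $\mathscr{H}(k) \subseteq \mathscr{H}(\tilde k) \subseteq \mathscr{H}(k')$, which is the assertion. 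The one subtlety to watch is that a general rational map need not be birational onto its image, so the resolution $\tilde S$ may fail to have $\tilde S \to S'$ regular everywhere; to handle this I would restrict to the open set $U$ where $\phi$ is defined, apply the blow-up analysis to the birational part over $U$, and absorb the remaining (codimension $\ge 1$) indeterminacy into a further sequence of blow-ups, again only enlarging the Hilbert class field. The main obstacle, to be explicit, is the identification ``blow-up of a point $\Longleftrightarrow$ passage to $\mathscr{H}(k)$'' at the level of the AF-algebra matrix $A$ and its Handelman triple; everything else is bookkeeping with the functor $F$ and monotonicity of $\mathscr{H}(\cdot)$.
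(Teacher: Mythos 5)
Your proposal takes a genuinely different route from the paper's --- you resolve the indeterminacy of $\phi$ through a surface $\tilde S$ dominating both $S$ and $S'$ and try to reduce everything to the single-blow-up step, whereas the paper never resolves: it caps off the domain of regularity $U\subset S$ with copies of $S^4$ to get a closed surface $S_0=U\,\#_n\, S^4$ carrying a regular map $S_0\to S$, argues that Zariski density of $U$ in $S$ forces the extension $k_0\subseteq k$ to ``depend only on the arithmetic of $k_0$'' and hence $k\cong\mathscr{H}(k_0)$ with $Gal(k|k_0)\cong Cl(k_0)$, and then gets $\mathscr{H}(k_0)\subseteq\mathscr{H}(k_0')$ from the regular map $U\to U'$ together with the monotonicity of $\mathscr{H}(\cdot)$ that you also invoke. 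The difficulty is that your version has a genuine gap at exactly the point you yourself flag as ``the main obstacle'': the identification of the blow-up step with the passage $k\mapsto\mathscr{H}(k)$ is offered only as ``the natural guess,'' with a proposed strategy (track the $\langle -1\rangle$ summand in $H^2$, see how it perturbs the matrix $A$ and the class $[\mathfrak{m}]$) but no argument. That identification is the entire content of the lemma; moreover it is precisely the paper's Lemma \ref{lm3.3}, which the paper proves \emph{after} Lemma \ref{lm3.1} and by the same Zariski-density argument rather than by any computation with $A$ or $H^2$. So as written you have reorganized the proof around an unproved statement that, in the paper's own architecture, sits downstream of the lemma you are trying to establish.

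A secondary inconsistency: under the paper's convention (Figure 1, where a regular map $X\to Y$ induces an inclusion of the source field into the target field), the regular legs $\tilde S\to S$ and $\tilde S\to S'$ of your resolution give $\tilde k\subseteq k$ and $\tilde k\subseteq k'$, i.e.\ two inclusions both pointing \emph{out of} $\tilde k$, not $k\subseteq\tilde k$ and $k'\subseteq\tilde k$ as stated in your second paragraph (your final chaining then silently switches back to $\tilde k\subseteq k'$). Two inclusions with a common source do not by themselves produce the asserted inclusion $\mathscr{H}(k)\subseteq\mathscr{H}(k')$; you need the blow-up leg to be promoted to $\tilde k\cong\mathscr{H}^{j}(k)$, which returns you to the unproved key claim. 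Until that claim is supplied --- whether by your $K_0$-theoretic route or by the paper's density argument --- the proof is incomplete.
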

\begin{proof}
In outline, 
an open set $U\subset S$ is an open 4-dimensional manifold 
with boundary. Taking a connected sum with the copies of $S^4$, one gets a compact smooth 
manifold $S_0$ and  a regular map $S_0\to S$. Such a map defines a field extension $k_0\subseteq k$. 
Since $U$ is Zariski dense in $S$, we conclude that 
the surface $S_0$ determines $S$ up to an isomorphism.  
Therefore the field extension $k_0\subseteq k$
must depend solely on the arithmetic of the field $k_0$. 
In other words, the intrinsic  invariants of $k_0$  control the Galois group $Gal ~(k|k_0)$.
This can happen if and only if $k\cong \mathscr{H}(k_0)$, so that    
 $Gal ~(k|k_0)\cong Cl~(k_0)$,  where   $Cl~(k_0)$  is the ideal class group of $k_0$. 
We pass to a detailed argument. 

\bigskip
(i)  Let  $\phi: S\dashrightarrow S'$ be a rational map. 
Then there exist the open sets  $U\subset S$ and $U'\subset S'$,
such that 
\begin{equation}\label{eq3.1}
\phi: U\longrightarrow U'. 
\end{equation}
is a regular map. 

\bigskip
(ii) Since $U\subset S$ is an open set, it is Zariski dense in $S$. 
The set $U$ is a  4-dimensional manifold with a boundary
corresponding to the poles of the rational map  $S\dashrightarrow S'$.  Let $n$ be the total number of the boundary 
components of $U$. Consider a compact smooth 4-dimensional manifold 
\begin{equation}\label{eq3.2}
S_0:=U ~\#_n ~S^4.
\end{equation}
 coming from the connected sum of 
$U$ with  the $n$ copies of the 4-dimensional sphere $S^4$ equipped 
with the standard smooth structure.

\bigskip
(iii) Notice  that $S_0$ can be endowed with a complex structure and can be identified 
with  an algebraic surface.  It is not hard to see, that there exists a regular map $S_0\to S$
and the corresponding field extension $k_0\subseteq k$ given by the commutative diagram  in Figure 1. 
Since  $k_0$ and $k$ are totally real number fields,  the $k_0\subseteq k$ is a Galois extension.
In particular, the Galois group  $Gal~(k|k_0)$ is correctly defined.

\bigskip
(iv) 
Recall that the Zariski closure of $U$ coincides with the surface $S$. 
Since  $S_0$ contains $U$, the Zariski closure of $S_0$ will coincide with $S$
as well.  Using  the diagram in Figure 1, we  conclude
that the Galois extension   $k_0\subseteq k$ depends only on the arithmetic of the ground
field $k_0$.  This means that the Galois group $Gal~(k|k_0)$ must be an invariant of
the field $k_0$. The only extension with such a property is the Hilbert class field 
$\mathscr{H}(k_0)$, i.e.     $Gal~(k|k_0)\cong Cl~(k_0)$, where $Cl~(k_0)$
is the ideal class group of $k_0$.  Thus $k\cong \mathscr{H}(k_0)$.

\bigskip
(v)  Using the regular map (\ref{eq3.1}), one gets an inclusion of the number fields 
$k_0\subseteq k_0'$ and therefore an inclusion $\mathscr{H}(k_0)\subseteq \mathscr{H}(k_0')$. 
In other words, the diagram in Figure 1 implies a commutative diagram in Figure 2.

\begin{figure}
\begin{picture}(300,110)(-70,0)
\put(20,70){\vector(0,-1){35}}
\put(122,70){\vector(0,-1){35}}
\put(45,23){\vector(1,0){60}}
\put(45,83){\vector(1,0){60}}
\put(10,20){$\mathscr{H}(k_0)$}
\put(113,20){$\mathscr{H}(k_0')$}
\put(17,80){$S$}
\put(115,80){ $S'$}
\put(55,30){\sf inclusion}
\put(50,90){\sf  rational map}
\end{picture}
\caption{}
\end{figure}

\bigskip
(vi) Lemma \ref{lm3.1} follows from Figure 2 after an adjustment of  the notation,
i.e. dropping  the subscript zero for the number field $k_0$.  
\end{proof}

 \begin{lemma}\label{lm3.2}
 If $S\dashrightarrow S'$ is a birational map, then $\mathscr{H}(k)\cong \mathscr{H}(k')$.
 \end{lemma}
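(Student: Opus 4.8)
The plan is to derive Lemma \ref{lm3.2} from Lemma \ref{lm3.1} by a symmetry argument, exploiting the fact that a birational map is a rational map whose inverse is also rational. First I would apply Lemma \ref{lm3.1} directly to the rational map $\phi: S\dashrightarrow S'$, obtaining the inclusion $\mathscr{H}(k)\subseteq\mathscr{H}(k')$. Then, since $\phi$ is birational, its inverse $\phi^{-1}: S'\dashrightarrow S$ is also a rational map, and a second application of Lemma \ref{lm3.1} (with the roles of $S$ and $S'$ interchanged) yields $\mathscr{H}(k')\subseteq\mathscr{H}(k)$. Combining the two inclusions gives $\mathscr{H}(k)\cong\mathscr{H}(k')$, which is exactly the assertion of the lemma.

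The one point that needs care is the compatibility of the two field extensions extracted from $\phi$ and from $\phi^{-1}$. In the proof of Lemma \ref{lm3.1} the field $k$ attached to the surface $S$ is built from the Handelman triple of the Etesi $C^*$-algebra $\mathbb{E}_S$, and by Theorem \ref{cor1.4}(ii) and Remark \ref{rmk2.2} this triple — hence the field $k$ — is a genuine invariant of the underlying $4$-manifold, independent of which birational model we started from. So the "$k'$" produced when we feed $\phi$ into Lemma \ref{lm3.1} is the same number field as the "$k'$" we use as the source field when feeding $\phi^{-1}$ into Lemma \ref{lm3.1}, and likewise for $k$. This is what allows the two inclusions to be placed end to end rather than merely asserting an abstract isomorphism of Hilbert class fields. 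I would spell this out in one sentence, invoking the functoriality in Remark \ref{rmk2.2}.

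The main (and really the only) obstacle is therefore not a new argument but the bookkeeping: making sure that the Hilbert class field operation is applied to the correct ground fields and that the inclusions $\mathscr{H}(k)\subseteq\mathscr{H}(k')$ and $\mathscr{H}(k')\subseteq\mathscr{H}(k)$ are inclusions of subfields of a common field (for instance an algebraic closure), so that mutual inclusion genuinely forces equality. Since $k$ and $k'$ are totally real, their Hilbert class fields are totally real as well, and there is no subtlety about complex places obstructing the isomorphism. With these observations in place the proof is a two-line deduction from Lemma \ref{lm3.1}.

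\begin{proof}
By Lemma \ref{lm3.1} applied to the rational map $\phi: S\dashrightarrow S'$ we have $\mathscr{H}(k)\subseteq\mathscr{H}(k')$. Since $\phi$ is birational, the inverse $\phi^{-1}: S'\dashrightarrow S$ is again a rational map; applying Lemma \ref{lm3.1} to $\phi^{-1}$ and using that, by Theorem \ref{cor1.4}(ii) and Remark \ref{rmk2.2}, the field $k$ (resp. $k'$) is an invariant of the surface $S$ (resp. $S'$) independent of the birational model, we obtain $\mathscr{H}(k')\subseteq\mathscr{H}(k)$. Viewing both Hilbert class fields inside a fixed algebraic closure, the two inclusions give $\mathscr{H}(k)\cong\mathscr{H}(k')$.
\end{proof}
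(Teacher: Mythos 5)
Your proof is correct and follows essentially the same route as the paper: apply Lemma \ref{lm3.1} to the birational map and to its inverse, obtain the two opposite inclusions of Hilbert class fields, and conclude that they coincide. The extra care you take about identifying the fields via the Handelman-triple invariance is a reasonable (and slightly more explicit) version of the paper's brief remark that the two inclusions are ``compatible.''
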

\begin{proof}
In view of lemma \ref{lm3.1},  the rational map $S\dashrightarrow S'$ implies an inclusion 
of the number fields $\mathscr{H}(k)\subseteq\mathscr{H}(k')$. 
Since $S\dashrightarrow S'$ is birational,  the inverse rational map 
$S'\dashrightarrow S$ gives an inclusion of the number fields $\mathscr{H}(k')\subseteq\mathscr{H}(k)$.
Clearly, the above inclusions are compatible if and only if  $\mathscr{H}(k')\cong\mathscr{H}(k)$. 
Lemma \ref{lm3.2} is proved. 
\end{proof} 

 \begin{lemma}\label{lm3.3}
If  $S\dashrightarrow S'$ is a blow-up, then 
 $k'\cong \mathscr{H}(k)$.
 \end{lemma}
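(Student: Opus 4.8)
The plan is to sharpen Lemma \ref{lm3.2}, which for an arbitrary birational map produces only the isomorphism of Hilbert class fields $\mathscr{H}(k)\cong\mathscr{H}(k')$, by using that a blow-up is the elementary birational modification isolated by Castelnuovo's theory, so that $k'$ sits at exactly the first level above $k$ in the tower (\ref{eq1.1}). I would start from the two structural features of a blow-up $\epsilon\colon S\dashrightarrow S'$ that do the work: it is regular on the complement $U=S\setminus\{p\}$ of a single point and restricts there to an isomorphism $U\xrightarrow{\ \sim\ }S'\setminus C$ onto the complement of the exceptional rational curve $C\subset S'$; and, by the Castelnuovo Theorem, $C$ is singled out among contracted curves by the self-intersection $C\cdot C=-1$. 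In particular $U$ is Zariski dense both in $S$ and, through $\epsilon$, in $S'$.

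The main step is to re-run the mechanism of the proof of Lemma \ref{lm3.1}, reading off the field $k'$ of $S'$ in place of an auxiliary field. Applying the construction (\ref{eq3.2}) to the dense open set $U\cong S'\setminus C$ and capping its single boundary sphere, one obtains, by Chow's theorem and Theorem \ref{cor1.4}, an algebraic surface whose Handelman triple recovers the number field $k$ of $S$, while the completion of the same open set inside $S'$ is $S'$ itself. Since $U$ is Zariski dense in $S'$, the surface $S'$, hence the field $k'$, is determined up to isomorphism by $U$; thus the extension $k\subseteq k'$ — a Galois extension, the fields being totally real as in step (iii) of the proof of Lemma \ref{lm3.1} — must depend only on the arithmetic of $k$. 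As in step (iv) there, the only extension with this property is the Hilbert class field, and the condition $C\cdot C=-1$ is what forces exactly one such step to be taken rather than several, so $\mathrm{Gal}(k'|k)\cong Cl(k)$ and $k'\cong\mathscr{H}(k)$.

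The hard part is this last calibration — that one blow-up is worth precisely one term of the tower, neither less (so that $k'\neq k$ whenever $Cl(k)\neq 1$) nor more (so that $k'$ is not $\mathscr{H}^2(k)$). I expect to settle it by tracking the functor $F$ of Remark \ref{rmk2.2} through the exceptional curve: the pair $(\Lambda',[\mathfrak{m}'])$ in the Handelman triple of $\mathbb{E}_{S'}$ is obtained from $(\Lambda,[\mathfrak{m}])$ by adjoining the datum of a single $(-1)$-curve, and I would identify the arithmetic effect of this operation with replacing the module $\mathfrak{m}$ by a representative of every ideal class of $\Lambda$ at once, i.e. with the passage $k\rightsquigarrow\mathscr{H}(k)$. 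Once that identification is in place the lemma follows, consistently with Lemma \ref{lm3.2}.
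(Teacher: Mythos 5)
Your proposal follows essentially the same route as the paper: the paper's proof of Lemma \ref{lm3.3} simply observes that a blow-up is a dominant rational map, so $\phi(S)$ is Zariski dense in $S'$, and then reruns the mechanism of Lemma \ref{lm3.1} to conclude that the extension $k\subseteq k'$ depends only on the arithmetic of $k$, whence $Gal(k'|k)\cong Cl(k)$ and $k'\cong\mathscr{H}(k)$. Your additional ``calibration'' step (tracking the $(-1)$-curve through the functor $F$ to rule out $k'=k$ or $k'\cong\mathscr{H}^2(k)$) is not present in the paper, which at that point simply asserts that the Hilbert class field is the only extension whose Galois group is an invariant of the base field; you have correctly identified this as the load-bearing assertion, and your sketch neither adds to nor subtracts from what the paper itself establishes.
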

\begin{proof}
Roughly speaking, we use the same argument as in lemma \ref{lm3.1}. 
Since the blow-up is a dominant rational map, the image of surface $S$ must be Zariski dense in $S'$. 
As it was shown earlier, one gets  an isomorphism between the 
field $k'$ and the Hilbert class field of $k$. We pass to a detailed argument. 

\bigskip
(i) Recall that any birational map $\phi: S\dashrightarrow S'$ is a composition
\begin{equation}\label{eq3.3bis}
\phi=\epsilon_1\circ\dots\circ\epsilon_m,
\end{equation}
where $\epsilon_i$ is a blow-up and $m<\infty$. 
In particular, the $\epsilon_i$ must be dominant rational maps. The latter means that  the image 
$\epsilon_i(S)$ is Zariski dense in $S'$.

\bigskip
(ii)   Consider a dominant rational map   $\phi: S\dashrightarrow S'$ and the corresponding
Galois  extension of the number fields $k\subseteq k'$ shown in Figure 3. 
Since  $S'$ is  the closure of a Zariski  dense subset $\phi(S)$,  we conclude 
that the extension   $k\subseteq k'$ depends only on the arithmetic of the ground
field $k$.  In particular,  the Galois group $Gal~(k'|k)$ is an invariant of
the field $k$. The only extension with such a property is the Hilbert class field 
$\mathscr{H}(k)$ for which   $Gal~(k|k_0)\cong Cl~(k_0)$, 
where $Cl~(k_0)$ is the ideal class group of $k_0$.   Thus $k'\cong \mathscr{H}(k)$. 
Lemma \ref{lm3.3} is proved. 
\end{proof}

\begin{figure}
\begin{picture}(300,110)(-70,0)
\put(20,70){\vector(0,-1){35}}
\put(122,70){\vector(0,-1){35}}
\put(45,23){\vector(1,0){60}}
\put(45,83){\vector(1,0){60}}
\put(18,20){$k$}
\put(118,20){$k'$}
\put(17,80){$S$}
\put(115,80){ $S'$}
\put(55,30){\sf inclusion}
\put(45,90){\sf  dominant map}
\end{picture}
\caption{}
\end{figure}

\bigskip
The `if'  part of theorem \ref{thm1.1} follows from lemma \ref{lm3.3}. Let us show
that if $k'\cong \mathscr{H}(k)$,  then  the corresponding birational map 
$\phi: S\dashrightarrow S'$  is a blow-up. Indeed,  since $\phi$ is a birational map,
one can apply lemma \ref{lm3.2}  to obtain an isomorphism of the number fields
\begin{equation}\label{eq3.3}
\mathscr{H}(k)\cong \mathscr{H}(k'). 
\end{equation}

\bigskip
The substitution  $k'\cong \mathscr{H}(k)$ into (\ref{eq3.3}) will imply
that $\mathscr{H}(k)\cong \mathscr{H}^2(k)$.  In other words, the class field tower of $k$ 
is stable after the first step.    In particular, such a tower cannot be decomposed into the 
sub-towers, i.e. the birational map $\phi$ cannot be decomposed into a composition of 
the blow-ups $\epsilon_i$. The latter means that $\phi$ is a blow-up itself, see (\ref{eq3.3bis}).  
The `only if' part of theorem \ref{thm1.1} follows. 

\bigskip
This argument finishes the proof of theorem \ref{thm1.1}.

\subsection{Proof of corollary \ref{cor1.2}}
Our proof is based on the Castelnuovo theory of the minimal models for algebraic 
surfaces.    Namely,  such a theory says that it takes a finite number of the blow-ups to 
get the minimal model of $S$.  The rest of the proof follows from 
theorem \ref{thm1.1} applied to the corresponding class field tower. We pass to a detailed 
argument. 

\bigskip
(i)  Let us prove that if $S$ is an algebraic surface, then its minimal model gives rise to  a finite real 
class field tower.  We denote by $S^{(m)}$ the minimal model obtained from $S$ by  composition 
of the blow-ups 
\begin{equation}\label{eq3.4}
\epsilon_i: ~S^{(i-1)}\dashrightarrow S^{(i)}, 
\end{equation}
where  $1\le i\le m$. 
In view of theorem \ref{thm1.1},  each blow-up $\epsilon_i$ defines a Hilbert class field extension
of  the $k_{i-1}$, i.e.
\begin{equation}\label{eq3.5}
k_i=\mathscr{H}(k_{i-1}), 
\end{equation}
where all $\{k_i ~|~1\le i\le m\}$ are real number fields.

\bigskip
(ii) After a finite number of the blow-ups (\ref{eq3.4}),  one gets a commutative diagram in Figure 4. 
Thus the finite real  class field tower corresponding to the minimal model $S^{(m)}$ has the form:
\begin{equation}\label{eq3.6}
k\subset \mathscr{H}(k)\subset \mathscr{H}^2(k)\subset\dots\subset 
\mathscr{H}^{m}(k) \cong \mathscr{H}^{m+1}(k)\cong\dots
\end{equation}

\begin{figure}
\begin{picture}(300,110)(0,0)

\put(20,70){\vector(0,-1){35}}
\put(55,70){\vector(0,-1){35}}
\put(92,70){\vector(0,-1){35}}
\put(165,70){\vector(0,-1){35}}
\put(18,20){$k \hookrightarrow  \mathscr{H}(k) \hookrightarrow  \mathscr{H}^2(k)
 \hookrightarrow\dots \hookrightarrow\mathscr{H}^{m}(k) \cong \mathscr{H}^{m+1}(k)\cong\dots$}
\put(17,80){$S ~\dashrightarrow ~S' 
~\dashrightarrow ~S''\dashrightarrow ~\dots ~\dashrightarrow S^{(m)}$}
\end{picture}
\caption{}
\end{figure}

\bigskip
(iii)  Let us show that if 
\begin{equation}\label{eq3.7}
k:=\mathscr{H}^0(k)\subset \mathscr{H}^1(k)\subset \mathscr{H}^2(k)\subset\dots
\end{equation}
is a real class field tower, then it is finite, i.e. 
there exists an integer $m<\infty$ such that 
$\mathscr{H}^{m}(k) \cong \mathscr{H}^{m+1}(k)\cong\dots$. 
Indeed, since $k_i:=\{\mathscr{H}^i(k) ~|~i\ge 0\}$ is a real number
field,  one can construct a Handelman triple  $(\Lambda_i, [\mathfrak{m}_i], k_i)$ 
and a smooth 4-dimensional manifold $\mathscr{M}_i$, see remark \ref{rmk2.2}.  
Since the triple $(\Lambda_i, [\mathfrak{m}_i], k_i)$  is a topological invariant 
of $\mathscr{M}_i$,  we can always assume that  $\mathscr{M}_i$ is a complex
surface by choosing a proper smoothing of $\mathscr{M}_i$ if necessary. 
Notice that one can identify $\mathscr{M}_i$ with an algebraic
surface $S^{(i)}$.

\bigskip
(iv) Since $k_i=\mathscr{H}(k_{i-1})$, one can apply  theorem \ref{thm1.1} saying  that
 the $S^{(i)}$ is a blow-up of the surface  $S^{(i-1)}$.  Using 
the Castelnuovo theorem, one concludes that 
the class field tower (\ref{eq3.7})  must stabilize for an integer $m<\infty$,
i.e.  such a the tower is always finite. 

\bigskip
This argument finishes the proof of corollary \ref{cor1.2}.


\bibliographystyle{amsplain}


\end{document}